\newtheoremstyle{theorem}
  {15pt}
  {15pt}
  {\sl}
  {\parindent}
  {\sc}
  {. }
  { }
  {}
\theoremstyle{theorem}
\newtheorem{theorem}{Theorem}[section]
\newtheorem{corollary}{Corollary}[section]
\newtheoremstyle{defi}
  {15pt}
  {15pt}
  {\rm}
  {\parindent}
  {\sc}
  {. }
  { }
  {}
\theoremstyle{defi}
\newtheorem{definition}{Definition}[section]
\newtheorem{remark}{Remark}[section]
\title[GREEN'S THEOREM FOR GENERALIZED FRACTIONAL  \dots]{GREEN'S THEOREM
FOR GENERALIZED\\[4pt] FRACTIONAL DERIVATIVES}
\author[T. Odzijewicz, A.B. Malinowska, D.F.M. Torres]{T. Odzijewicz $^1$, A. B. Malinowska $^2$, D. F. M. Torres $^1$}
\thanks{This is a preprint of a paper whose final and definite form will appear at 
{\tt www.springerlink.com}: Fract. Calc. Appl. Anal. {\bf 16} (2013), no.~1, in press.}
\begin{document}

\vbox to 2.5cm { \vfill }

\bigskip \medskip


\begin{abstract}
We study three types of generalized partial fractional operators.
An extension of Green's theorem, by considering partial fractional
derivatives with more general kernels, is proved.
New results are obtained, even in the particular case when the generalized
operators are reduced to the standard partial fractional derivatives
and fractional integrals in the sense of Riemann--Liouville or Caputo.

\medskip

{\it MSC 2010\/}: Primary 26B20; Secondary 35R11

\smallskip

{\it Key Words and Phrases}: fractional calculus,
generalized operators, Green's theorem
\end{abstract}


\maketitle

\vspace*{-16pt}


\section{Introduction}

In 1828, the English mathematician George Green (1793-1841),
who up to his forties was working as a baker and a miller,
published an essay where he introduced a formula connecting
the line integral around a simple closed curve with a double integral.
Within years, this result turned out to be useful in many fields
of mathematics, physics and engineering \cite{Colton,Emch,Russenschuck,Sherman}.
Generalizations of Green's theorem have chosen different directions,
and are known as the Kelvin--Stokes and the Gauss--Ostrogradsky theorems.

In this paper, in contrast with previous works,
we want to state a Green's theorem for generalized
partial fractional derivatives. Notions of generalized fractional
derivatives were introduced in \cite{OmPrakashAgrawal,book:Kiryakova},
and then developed in \cite{FVC_Gen,MyID:227}.
A fractional version of the Green theorem
has been already showed for Riemann--Liouville integrals
and Caputo derivatives \cite{tarasov}, and for fractional operators
in the sense of Jumarie \cite{MyID:182}. However, generalized fractional
operators have never been considered. Our result may be useful
in the theory of fractional calculus (see, e.g.,
\cite{book:Kilbas,book:Klimek,book:Podlubny,book:Samko}),
in particular for the two-dimensional fractional calculus of variations,
where the derivation of Euler--Lagrange equations uses,
as a key step in the proof,
Green's theorem \cite{MyID:182,Cresson,book:frac,tatiana}.

The paper is organized as follows. In Section~\ref{sec:prelim} a common review
of ordinary and partial generalized fractional calculus is given. Our results
are then formulated and proved in Section~\ref{sec:MR}: we show the two-dimensional
integration by parts formula for generalized Riemann--Liouville partial
fractional integrals (Theorem~\ref{theorem:GRI})
and Green's theorem for generalized
partial fractional derivatives (Theorem~\ref{thm:ggt}).


\section{Basic Notions}
\label{sec:prelim}

In this section we give definitions of generalized ordinary
and partial fractional operators. By the choice of a certain
kernel, these operators can be reduced to the standard fractional
integrals and derivatives. For more on the subject, we refer the
reader to \cite{OmPrakashAgrawal,MR2974327,book:Kiryakova,FVC_Gen,MyID:227}.


\subsection{Generalized fractional operators}

\begin{definition}[Generalized fractional integral]
\label{def:GI}
The operator $K_P^\alpha$ is given by
\begin{equation*}
\left(K_P^{\alpha}f\right)(t)
:=p\int\limits_{a}^{t}k_{\alpha}(t,\tau)f(\tau)d\tau
+q\int\limits_{t}^{b}k_{\alpha}(\tau,t)f(\tau)d\tau,
\end{equation*}
where $P=\langle a,t,b,p,q\rangle$ is the \emph{parameter set}
($p$-set for brevity), $t\in[a,b]$, $p,q$ are real numbers,
and $k_{\alpha}(t,\tau)$ is a kernel which may depend on $\alpha$.
The operator $K_P^\alpha$ is referred as the \emph{operator $K$}
($K$-op for simplicity) of order $\alpha$ and $p$-set $P$.
\end{definition}

\begin{theorem}[Theorem 2.3 of \cite{FVC_Gen}]
\label{theorem:L1}
Let $k_\alpha$ be a difference kernel, i.e.,
$k_\alpha(t,\tau)=k_\alpha(t-\tau)$ and
$k_\alpha\in L_1\left([a,b]\right)$. Then,
$K_P^{\alpha}:L_1\left([a,b]\right)\rightarrow
L_1\left([a,b]\right)$ is well defined,
bounded and linear operator.
\end{theorem}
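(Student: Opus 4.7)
The plan is to verify linearity (which is immediate from the linearity of the Lebesgue integral), and then to establish well-definedness together with boundedness simultaneously via a Fubini/Tonelli argument combined with a translation change of variables. Since the operator is a sum of two structurally analogous pieces, it suffices to handle each piece separately and then add the estimates.

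First I would argue measurability. Because $k_\alpha$ is measurable on an interval and the map $(t,\tau)\mapsto t-\tau$ is continuous, the function $(t,\tau)\mapsto k_\alpha(t-\tau)f(\tau)\chi_{\{a\le \tau\le t\}}$ is jointly measurable on $[a,b]\times[a,b]$. Thus the partial integral defining the first summand of $K_P^\alpha f$ is measurable in $t$ by Fubini; the second summand is treated by the same argument with the roles of $t$ and $\tau$ swapped.

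Next I would estimate the $L_1$-norm. For the first summand, applying Tonelli's theorem gives
\begin{equation*}
\int_a^b \Bigl|\,p\int_a^{t} k_\alpha(t-\tau)f(\tau)\,d\tau\Bigr|\,dt
\le |p|\int_a^b |f(\tau)|\int_{\tau}^{b} |k_\alpha(t-\tau)|\,dt\,d\tau.
\end{equation*}
The substitution $s=t-\tau$ converts the inner integral into $\int_0^{b-\tau}|k_\alpha(s)|\,ds$, which is bounded by $\|k_\alpha\|_{L_1}$. Hence this summand has $L_1$-norm at most $|p|\,\|k_\alpha\|_{L_1}\|f\|_{L_1}$. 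An entirely parallel computation, now with the substitution $s=\tau-t$, yields $|q|\,\|k_\alpha\|_{L_1}\|f\|_{L_1}$ for the second summand. Adding the two estimates gives
\begin{equation*}
\|K_P^\alpha f\|_{L_1}\le (|p|+|q|)\,\|k_\alpha\|_{L_1}\,\|f\|_{L_1},
\end{equation*}
which simultaneously shows that $K_P^\alpha f\in L_1([a,b])$ (well-definedness) and that $K_P^\alpha$ is bounded with operator norm at most $(|p|+|q|)\|k_\alpha\|_{L_1}$.

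I do not expect a genuine obstacle here: the argument is essentially Young's inequality for convolution with $\|k_\alpha\|_{L_1}\|f\|_{L_1}$ on the right-hand side, adapted to the fact that the two integrals run over truncated intervals rather than the whole line. The only point requiring a little care is ensuring that after the change of variables the domain of integration for $k_\alpha$ lies inside the interval where $k_\alpha$ is assumed integrable, which follows because $\tau\in[a,b]$ forces $s=t-\tau$ (resp.\ $\tau-t$) to lie in $[0,b-a]\subset[a,b]$ up to a harmless translation.
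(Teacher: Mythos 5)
Your proof is correct and is essentially the argument the paper relies on: the statement is quoted from \cite{FVC_Gen} without proof here, but the same Tonelli-plus-translation estimate --- splitting the majorant according to $\tau\le t$ versus $\tau>t$ and arriving at the bound $(|p|+|q|)\,\|k_\alpha\|_{L_1}\|f\|_{L_1}$ --- is exactly what the authors carry out in the two-dimensional setting in the proof of Theorem~\ref{theorem:GRI}. Your closing remark correctly spots the only wrinkle, namely that the integrability hypothesis should really read $k_\alpha\in L_1([0,b-a])$ (as the paper itself writes in Theorems~\ref{theorem:GRI} and~\ref{thm:ggt}), and resolves it in the intended way.
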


The $K$-op reduces to the classical left or right Riemann--Liouville
fractional integral (see, \textrm{e.g.}, \cite{book:Kilbas,book:Podlubny})
for a suitably chosen kernel $k_{\alpha}(t,\tau)$ and $p$-set $P$. Indeed,
let $k_{\alpha}(t-\tau)=\frac{1}{\Gamma(\alpha)}(t-\tau)^{\alpha-1}$.
If $P=\langle a,t,b,1,0\rangle$, then
\begin{equation*}
\left(K_{P}^{\alpha}f\right)(t)=\frac{1}{\Gamma(\alpha)}
\int\limits_a^t(t-\tau)^{\alpha-1}f(\tau)d\tau
=: \left({_{a}}\textsl{I}^{\alpha}_{t} f\right)(t)
\end{equation*}
is the left Riemann--Liouville fractional integral
of order $\alpha$; if $P=\langle a,t,b,0,1\rangle$, then
\begin{equation*}
\left(K_{P}^{\alpha}f\right)(t)=\frac{1}{\Gamma(\alpha)}
\int\limits_t^b(\tau-t)^{\alpha-1}f(\tau)d\tau
=: \left({_{t}}\textsl{I}^{\alpha}_{b} f\right)(t)
\end{equation*}
is the right Riemann--Liouville fractional integral
of order $\alpha$.

\begin{definition}[Generalized Riemann--Liouville derivative]
\label{def:GRL}
Let $P$ be a given parameter set. The operator $A_P^\alpha$,
$0 < \alpha < 1$, is defined for functions $f$ such that
$K_P^{1-\alpha} f\in AC\left([a,b]\right)$ by
$A_P^\alpha := \frac{d}{dt} \circ K_P^{1-\alpha}$,
where $D$ denotes the standard derivative.
We refer to $A_P^\alpha$ as \emph{operator $A$} ($A$-op)
of order $\alpha$ and $p$-set $P$.
\end{definition}

\begin{definition}[Generalized Caputo derivative]
\label{def:GC}
Let $P$ be a given parameter set. The operator $B_P^\alpha$,
$\alpha \in (0,1)$, is defined for functions $f$
such that $f\in AC\left([a,b]\right)$ by
$B_P^\alpha :=K_P^{1-\alpha} \circ \frac{d}{dt}$
and is referred as the \emph{operator $B$} ($B$-op)
of order $\alpha$ and $p$-set $P$.
\end{definition}

Let $k_{1-\alpha}(t-\tau)=\frac{1}{\Gamma(1-\alpha)}(t-\tau)^{-\alpha}$,
$\alpha \in (0,1)$. If $P=\langle a,t,b,1,0\rangle$, then
\begin{equation*}
\left(A_{P}^\alpha f\right)(t) = \frac{1}{\Gamma(1-\alpha)}
\frac{d}{dt} \int\limits_a^t(t-\tau)^{-\alpha}f(\tau)d\tau
=: \left({_{a}}\textsl{D}^{\alpha}_{t} f\right)(t)
\end{equation*}
is the standard left Riemann--Liouville fractional derivative
of order $\alpha$ while
\begin{equation*}
\left(B_{P}^\alpha f\right)(t)
=\frac{1}{\Gamma(1-\alpha)}
\int\limits_a^t(t-\tau)^{-\alpha} f'(\tau)d\tau
=: \left({^{C}_{a}}\textsl{D}^{\alpha}_{t} f\right)(t)
\end{equation*}
is the standard left Caputo fractional derivative of order $\alpha$;
if $P=\langle a,t,b,0,1\rangle$, then
\begin{equation*}
- \left(A_{P}^\alpha f\right)(t)
= - \frac{1}{\Gamma(1-\alpha)} \frac{d}{dt}
\int\limits_t^b(\tau-t)^{-\alpha}f(\tau)d\tau
=: \left({_{t}}\textsl{D}^{\alpha}_{b} f\right)(t)
\end{equation*}
is the standard right Riemann--Liouville
fractional derivative of order $\alpha$ while
\begin{equation*}
- \left(B_{P}^\alpha f\right)(t)
= - \frac{1}{\Gamma(1-\alpha)}
\int\limits_t^b(\tau-t)^{-\alpha} f'(\tau)d\tau
=: \left({^{C}_{t}}\textsl{D}^{\alpha}_{b} f\right)(t)
\end{equation*}
is the standard right Caputo fractional derivative of order $\alpha$.


\subsection{Generalized partial fractional operators}

Let $\alpha$ be a real number from the interval $(0,1)$,
$\Delta_n=[a_1,b_1]\times\dots\times [a_n,b_n]$, $n\in\mathbb{N}$,
be a subset of $\mathbb{R}^n$, $\textbf{t}=(t_1,\dots, t_n)$
be a point in $\Delta_n$ and $\mathbf{p}=(p_1,\dots,p_n)$,
$\mathbf{q}=(q_1,\dots,q_n)\in\mathbb{R}^n$. Generalized partial fractional
integrals and derivatives are a natural generalization
of the corresponding one-dimensional generalized fractional
integrals and derivatives.

\begin{definition}[Generalized partial fractional integral]
\label{def:GPI}
Let function $f=f(t_1,\dots,t_n)$ be continuous
on the set $\Delta_n$. The generalized partial
Riemann--Liouville fractional integral of order $\alpha$
with respect to the $i$th variable $t_i$ is given by
\begin{multline*}
\left(K_{P_{t_i}}^{\alpha}f\right)(\textbf{t})
:=p_i\int\limits_{a_i}^{t_i}k_{\alpha}(t_i,\tau)
f(t_1,\dots,t_{i-1},\tau,t_{i+1},\dots,t_n)d\tau \\
+q_i\int\limits_{t_i}^{b_i}k_{\alpha}(\tau,t_i)
f(t_1,\dots,t_{i-1},\tau,t_{i+1},\dots,t_n)d\tau,
\end{multline*}
where $P_{t_i}=\langle a_i,t_i,b_i,p_i,q_i \rangle$.
We refer to $K_{P_{t_i}}^{\alpha}$ as the
\emph{partial operator $K$} (partial $K$-op)
of order $\alpha$ and $p$-set $P_{t_i}$.
\end{definition}

\begin{definition}[Generalized partial Riemann--Liouville derivatives]
\label{def:GPRL}
Let $P_{t_i}=\langle a_i,t_i,b_i,p_i,q_i \rangle$
and $K_{P_{t_i}}^{1-\alpha}f\in C^1(\Delta_n)$.
The generalized partial Riemann--Liouville fractional
derivative of order $\alpha$ with respect
to the $i$th variable $t_i$ is given by
\begin{equation*}
\begin{split}
\bigl(A_{P_{t_i}}^{\alpha}f\bigr)(\textbf{t})
&:=\left(\frac{\partial}{\partial t_i}\circ K_{P_{t_i}}^{1-\alpha}f\right)(\textbf{t})\\
&=\frac{\partial}{\partial t_i}\left(p_i\int\limits_{a_i}^{t_i}
k_{1-\alpha}(t_i,\tau)f(t_1,\dots,t_{i-1},\tau,t_{i+1},\dots,t_n)d\tau \right.\\
&\qquad \qquad \left.+q_i\int\limits_{t_i}^{b_i}k_{1-\alpha}(\tau,t_i)
f(t_1,\dots,t_{i-1},\tau,t_{i+1},\dots,t_n)d\tau\right).
\end{split}
\end{equation*}
The operator $A_{P_{t_i}}^{\alpha}$ is referred as the
\emph{partial operator $A$} (partial $A$-op)
of order $\alpha$ and $p$-set $P_{t_i}$.
\end{definition}

\begin{definition}[Generalized partial Caputo derivative]
\label{def:GPC}
Let $P_{t_i}=\langle a_i,t_i,b_i,p_i,q_i \rangle$
and $f\in C^1(\Delta_n)$. The generalized partial Caputo
fractional derivative of order $\alpha$ with respect
to the $i$th variable $t_i$ is given by
\begin{equation*}
\begin{split}
\bigl(B_{P_{t_i}}^{\alpha}f\bigr)(\textbf{t})
&:=\left(K_{P_{t_i}}^{1-\alpha} \circ \frac{\partial}{\partial t_i} f\right)(\textbf{t})\\
&=p_i\int\limits_{a_i}^{t_i}k_{1-\alpha}(t_i,\tau)\frac{\partial}{\partial \tau}
f(t_1,\dots,t_{i-1},\tau,t_{i+1},\dots,t_n)d\tau\\
& \qquad \qquad +q_i\int\limits_{t_i}^{b_i}k_{1-\alpha}(\tau,t_i)\frac{\partial}{\partial \tau}
f(t_1,\dots,t_{i-1},\tau,t_{i+1},\dots,t_n)d\tau
\end{split}
\end{equation*}
and is referred as the \emph{partial operator $B$} (partial $B$-op)
of order $\alpha$ and $p$-set $P_{t_i}$.
\end{definition}

Similarly as in the one-dimensional case \cite{OmPrakashAgrawal,FVC_Gen,MyID:227},
the generalized partial operators $K$, $A$ and $B$ here introduced
give the standard partial fractional integrals
and derivatives for particular kernels and $p$-sets.
The left- and right-sided Riemann--Liouville partial fractional integrals
with respect to the $i$th variable $t_i$ are obtained by choosing the kernel
$$
k_{\alpha}(t_i,\tau)=\frac{1}{\Gamma(\alpha)}(t_i-\tau)^{\alpha-1}
$$
and $p$-sets $L_{t_i}=\langle a_i,t_i,b_i,1,0\rangle$
and $R_{t_i}=\langle a_i,t_i,b_i,0,1\rangle$, respectively:
\begin{equation*}
\begin{split}
\left({_{a_i}}\textsl{I}^{\alpha}_{t_i} f\right)(\textbf{t})
&= \left(K_{L_{t_i}}^{\alpha}f\right)(\textbf{t})\\
&=\frac{1}{\Gamma(\alpha)}\int\limits_{a_i}^{t_i}(t_i-\tau)^{\alpha-1}
f(t_1,\dots,t_{i-1},\tau,t_{i+1},\dots,t_n)d\tau,
\end{split}
\end{equation*}
\begin{equation*}
\begin{split}
\left({_{t_i}}\textsl{I}^{\alpha}_{b_i} f\right)(\textbf{t})
&= \left(K_{R_{t_i}}^{\alpha}f\right)(\textbf{t})\\
&=\frac{1}{\Gamma(\alpha)}\int\limits_{t_i}^{b_i}(\tau-t_i)^{\alpha-1}
f(t_1,\dots,t_{i-1},\tau,t_{i+1},\dots,t_n)d\tau.
\end{split}
\end{equation*}
The standard left- and right-sided partial Riemann--Liouville and Caputo
fractional derivatives with respect to the $i$th variable $t_i$
are obtained with the choice of kernel $k_{1-\alpha}(t_i,\tau)
=\frac{1}{\Gamma(1-\alpha)}(t_i-\tau)^{-\alpha}$:
if $P_{t_i}=\langle a_i,t_i,b_i,1,0\rangle$, then
\begin{equation*}
\begin{split}
\left({_{a_i}}\textsl{D}^{\alpha}_{t_i} f\right)(\textbf{t})
&= \left(A_{P_{t_i}}^{\alpha}f\right)(\textbf{t})\\
&=\frac{1}{\Gamma(1-\alpha)}\frac{\partial}{\partial t_i}
\int\limits_{a_i}^{t_i}(t_i-\tau)^{-\alpha}
f(t_1,\dots,t_{i-1},\tau,t_{i+1},\dots,t_n)d\tau
\end{split}
\end{equation*}
and
\begin{equation*}
\begin{split}
\left({^{C}_{a_i}}\textsl{D}^{\alpha}_{t_i} f\right)(\textbf{t})
&= \left(B_{P_{t_i}}^{\alpha}f\right)(\textbf{t})\\
&=\frac{1}{\Gamma(1-\alpha)}\int\limits_{a_i}^{t_i}
(t_i-\tau)^{-\alpha}\frac{\partial}{\partial \tau}
f(t_1,\dots,t_{i-1},\tau,t_{i+1},\dots,t_n)d\tau;
\end{split}
\end{equation*}
if $P_{t_i}=\langle a_i,t_i,b_i,0,1\rangle$, then
\begin{equation*}
\begin{split}
\left({_{t_i}}\textsl{D}^{\alpha}_{b_i} f\right)(\textbf{t})
&= -\left(A_{P_{t_i}}^{\alpha}f\right)(\textbf{t})\\
&=-\frac{1}{\Gamma(1-\alpha)}\frac{\partial}{\partial t_i}\int\limits_{t_i}^{b_i}
(\tau-t_i)^{-\alpha}f(t_1,\dots,t_{i-1},\tau,t_{i+1},\dots,t_n)d\tau
\end{split}
\end{equation*}
and
\begin{equation*}
\begin{split}
\left({^{C}_{t_i}}\textsl{D}^{\alpha}_{b_i} f\right)(\textbf{t})
&= -\left(B_{P_{t_i}}^{\alpha}f\right)(\textbf{t})\\
&=-\frac{1}{\Gamma(1-\alpha)}\int\limits_{t_i}^{b_i}(\tau-t_i)^{-\alpha}
\frac{\partial}{\partial \tau}f(t_1,\dots,t_{i-1},\tau,t_{i+1},\dots,t_n)d\tau.
\end{split}
\end{equation*}

\begin{remark}
In Definitions~\ref{def:GPI}, \ref{def:GPRL} and \ref{def:GPC},
all the variables, except $t_i$, are kept fixed. That choice
of fixed values determines a function
$f_{t_1,\dots,t_{i-1},t_{i+1},\dots,t_n}:[a_i,b_i]\rightarrow \mathbb{R}$
of one variable $t_i$:
\begin{equation*}
f_{t_1,\dots,t_{i-1},t_{i+1},\dots,t_n}(t_i)
=f(t_1,\dots,t_{i-1},t_i,t_{i+1},\dots,t_n).
\end{equation*}
By Definitions~\ref{def:GI}, \ref{def:GRL}, \ref{def:GC}
and \ref{def:GPI}, \ref{def:GPRL}, \ref{def:GPC}, we have
\begin{gather*}
\left(K_{P_{t_i}}^{\alpha}f_{t_1,\dots,t_{i-1},t_{i+1},\dots,t_n}\right)(t_i)
=\left(K_{P_{t_i}}^{\alpha}f\right)(t_1,\dots,t_{i-1},t_i,t_{i+1},\dots,t_n),\\
\left(A_{P_{t_i}}^{\alpha}f_{t_1,\dots,t_{i-1},t_{i+1},\dots,t_n}\right)(t_i)
=\left(A_{P_{t_i}}^{\alpha}f\right)(t_1,\dots,t_{i-1},t_i,t_{i+1},\dots,t_n),\\
\left(B_{P_{t_i}}^{\alpha}f_{t_1,\dots,t_{i-1},t_{i+1},\dots,t_n}\right)(t_i)
=\left(B_{P_{t_i}}^{\alpha}f\right)(t_1,\dots,t_{i-1},t_i,t_{i+1},\dots,t_n).
\end{gather*}
Therefore, as in the classical integer order case, computation of partial
generalized fractional operators is reduced to the computation
of one-variable generalized fractional operators.
\end{remark}


\section{Green's Theorem for Generalized Fractional Derivatives}
\label{sec:MR}

\begin{definition}[Dual $p$-set]
Let $P_{t_i}=\langle a_i,t_i,b_i,p_i,q_i\rangle$, $i\in\mathbb{N}$.
We denote by $P_{t_i}^{*}$ the $p$-set
$P_{t_i}^{*} = \langle  a_i,t_i,b_i,q_i,p_i\rangle$
and call it the dual of $P_{t_i}$.
\end{definition}

\begin{theorem}[Generalized 2D Integration by Parts]
\label{theorem:GRI}
Let $\alpha\in (0,1)$, $P_{t_i}=\langle a_i,t_i,b_i,p_i,q_i \rangle$
be a parameter set, and $k_\alpha$ be a difference kernel, i.e.,
$k_\alpha(t_i,\tau)=k_\alpha(t_i-\tau)$ such that
$k_\alpha\in L_1([0,b_i-a_i])$, $i=1,2$.
If $f,g,\eta_1,\eta_2\in C\left(\Delta_2\right)$, then the
generalized partial fractional integrals satisfy the following identity:
\begin{multline*}
\int\limits_{a_1}^{b_1}\int\limits_{a_2}^{b_2}
\left[g(\textbf{t})\left(K_{P_{t_1}}^{\alpha}\eta_1\right)(\textbf{t})
+f(\textbf{t})\left(K_{P_{t_2}}^{\alpha}\eta_2\right)(\textbf{t})\right]dt_2 dt_1\\
=\int\limits_{a_1}^{b_1}\int\limits_{a_2}^{b_2}
\eta_1(\textbf{t})\left[\left(K_{P_{t_1}^*}^{\alpha}g\right)(\textbf{t})\right]
+\eta_2(\textbf{t})\left[\left(K_{P_{t_2}^*}^{\alpha}f\right)(\textbf{t})\right]dt_2 dt_1,
\end{multline*}
where $P_{t_i}^*$ is the dual of $P_{t_i}$, $i=1,2$.
\end{theorem}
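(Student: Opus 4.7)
The plan is to split the asserted identity into two independent pieces (one for each term) by linearity of the integral, and then reduce each piece to a one-dimensional integration-by-parts formula for the $K$-op, which itself follows from Fubini's theorem. The two pieces are completely symmetric, so I would prove only the first (the $\eta_1$ term with respect to $t_1$) and indicate that the second is analogous.

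First I would fix $t_2\in [a_2,b_2]$ and consider the one-dimensional identity
\begin{equation*}
\int\limits_{a_1}^{b_1} g(t_1,t_2)\bigl(K_{P_{t_1}}^{\alpha}\eta_1\bigr)(t_1,t_2)\,dt_1
=\int\limits_{a_1}^{b_1} \eta_1(t_1,t_2)\bigl(K_{P_{t_1}^{*}}^{\alpha}g\bigr)(t_1,t_2)\,dt_1,
\end{equation*}
viewing $g$ and $\eta_1$ as functions of $t_1$ alone (with $t_2$ as a parameter). By Definition~\ref{def:GPI} the left-hand side is
\begin{equation*}
p_1\!\int\limits_{a_1}^{b_1}\!\! g(t_1,t_2)\!\int\limits_{a_1}^{t_1}\!\! k_{\alpha}(t_1-\tau)\eta_1(\tau,t_2)\,d\tau\,dt_1
+q_1\!\int\limits_{a_1}^{b_1}\!\! g(t_1,t_2)\!\int\limits_{t_1}^{b_1}\!\! k_{\alpha}(\tau-t_1)\eta_1(\tau,t_2)\,d\tau\,dt_1.
\end{equation*}
In each of the two iterated integrals I would apply Fubini's theorem to switch the order of integration. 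The triangular region $\{a_1\le\tau\le t_1\le b_1\}$ in the first integral rewrites as $\{a_1\le \tau\le b_1,\ \tau\le t_1\le b_1\}$, and symmetrically for the second. After a relabelling of the dummy variable $\tau\leftrightarrow t_1$, the inner integrals reassemble into $(K_{P_{t_1}^{*}}^{\alpha}g)(t_1,t_2)$, where the swap of the outer coefficients $p_1$ and $q_1$ is exactly the passage from $P_{t_1}$ to its dual $P_{t_1}^{*}$. This yields the claimed one-dimensional identity.

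Fubini is applicable because $f,g,\eta_1,\eta_2$ are continuous on the compact set $\Delta_2$ (hence bounded), $k_\alpha\in L_1([0,b_i-a_i])$, and Theorem~\ref{theorem:L1} guarantees that the generalized fractional integrals produce $L_1$ functions; therefore the integrands are absolutely integrable on the triangular regions. Integrating the one-dimensional identity in $t_2$ over $[a_2,b_2]$ and applying a final Fubini to recover the double integral written in the stated $dt_2\,dt_1$ order proves the $\eta_1$ part of the formula. The same argument, with the roles of $(t_1,g,\eta_1,P_{t_1})$ and $(t_2,f,\eta_2,P_{t_2})$ interchanged, gives the $\eta_2$ part, and adding the two identities yields the theorem.

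The only real obstacle is bookkeeping: one must verify that the exchange of integration order indeed turns the coefficient $p_i$ of $K_{P_{t_i}}^\alpha$ into the coefficient $q_i$ of $K_{P_{t_i}^*}^\alpha$ (and vice versa), by correctly identifying $k_\alpha(t_i-\tau)$ and $k_\alpha(\tau-t_i)$ after the renaming. Everything else is a direct application of the one-dimensional adjointness implicit in the difference-kernel structure together with Fubini's theorem.
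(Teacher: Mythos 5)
Your proposal is correct and follows essentially the same route as the paper: both proofs hinge on applying Fubini's theorem over the triangular regions to move the kernel onto $g$ and $f$, with the swap of $p_i$ and $q_i$ producing the dual $p$-set, and both justify Fubini by combining the boundedness of the continuous functions on the compact rectangle with the $L_1$ assumption on the difference kernel. The only (inessential) organizational difference is that you first establish the one-dimensional adjointness identity for fixed $t_2$ and then integrate, whereas the paper bounds the full triple integrals of the absolute integrands and applies Fubini directly to the iterated integrals.
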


\begin{proof}
Define
\[
F_1(\textbf{t},\tau):=
\left\{
\begin{array}{ll}
\left|p_1 k_\alpha(t_1-\tau)\right|
\cdot\left|g(\textbf{t})\right|\cdot \left|\eta_1(\tau,t_2)\right|
& \mbox{if $\tau \leq t_1$}\\
\left|q_1 k_\alpha(\tau-t_1)\right|
\cdot \left|g(\textbf{t})\right|\cdot\left|\eta_1(\tau,t_2)\right|
& \mbox{if $\tau > t_1$}
\end{array}\right.
\]
for all $(\textbf{t},\tau)\in [a_1,b_1]\times [a_2,b_2]\times [a_1,b_1]$ and
\[
F_2(\textbf{t},\tau):=
\left\{
\begin{array}{ll}
\left|p_2 k_\alpha(t_2-\tau)\right|
\cdot\left|f(\textbf{t})\right|\cdot \left|\eta_2(t_1,\tau)\right|
& \mbox{if $\tau \leq t_2$}\\
\left|q_2 k_\alpha(\tau-t_2)\right|
\cdot \left|f(\textbf{t})\right|\cdot\left|\eta_2(t_1,\tau)\right|
& \mbox{if $\tau > t_2$}
\end{array}\right.
\]
for all $(\textbf{t},\tau)\in [a_1,b_1]\times [a_2,b_2]\times [a_2,b_2]$.
Since $f,g$ and $\eta_i$, $i=1,2$, are continuous functions on $\Delta_2$,
they are bounded on $\Delta_2$. Hence, there exist real numbers
$C_1,C_2,C_3,C_4>0$ such that
$$
\left|f(\textbf{t})\right|\leq C_1, \quad
\left|g(\textbf{t})\right| \leq C_2, \quad
\left|\eta_1(\textbf{t})\right| \leq C_3,
\quad \left|\eta_2(\textbf{t})\right| \leq C_4,
$$
for all $\textbf{t}\in \Delta_2$. Therefore,
\begin{equation*}
\begin{split}
\int_{a_1}^{b_1}&\int_{a_2}^{b_2}\int_{a_1}^{b_1}
F_1(\textbf{t},\tau)dt_1 dt_2 d\tau
+ \int_{a_1}^{b_1} \int_{a_2}^{b_2} \int_{a_2}^{b_2}
F_2(\textbf{t},\tau)dt_2 d\tau dt_1\\
&=\int_{a_1}^{b_1}\left(\int_{a_2}^{b_2}\left(
\int_{\tau}^{b_1} \left|p_1 k_\alpha(t_1-\tau)\right|
\cdot\left|g(\textbf{t})\right|\cdot \left|\eta_1(\tau,t_2)\right| dt_1\right.\right. \\
&\quad + \left.\left.\int_{a_1}^{\tau} \left|q_1 k_\alpha(\tau-t_1)\right|
\cdot \left|g(\textbf{t})\right|\cdot\left|\eta_1(\tau,t_2)\right| dt_1 \right)dt_2 \right) d\tau \\
&\quad +\int_{a_1}^{b_1}\left(\int_{a_2}^{b_2}\left(\int_{\tau}^{b_2} \left|p_2 k_\alpha(t_2-\tau)\right|
\cdot\left|f(\textbf{t})\right|\cdot \left|\eta_2(t_1,\tau)\right| dt_2 \right.\right.\\
&\quad +\left.\left.\int_{a_2}^{\tau} \left|q_2 k_\alpha(\tau-t_2)\right|
\cdot \left|f(\textbf{t})\right|\cdot\left|\eta_2(t_1,\tau)\right| dt_2 \right)d\tau \right)dt_1\\
&\leq C_2 C_3 \left[\int_{a_1}^{b_1}\left(\int_{a_2}^{b_2}\left(\int_{\tau}^{b_1}
\left|p_1 k_\alpha(t_1-\tau)\right|dt_1\right.\right.\right. \\
&\quad +\left.\left.\left.\int_{a_1}^{\tau}
\left|q_1 k_\alpha(\tau-t_1)\right| dt_1\right)dt_2\right)d\tau\right]\\
&\quad + C_1 C_4 \left[\int_{a_1}^{b_1}\left(\int_{a_2}^{b_2}
\left(\int_{\tau}^{b_2}\left|p_2 k_\alpha(t_2-\tau)\right| dt_2\right.\right.\right.\\
&\quad + \left.\left.\left. \int_{\tau}^{b_2}\left|q_2 k_\alpha(\tau-t_2)\right|dt_2
\right)d\tau \right)dt_1\right]\\
&\leq C_2 C_3 \left[\int_{a_1}^{b_1}\left(\int_{a_2}^{b_2}\left(\int_{0}^{b_1-a_1}
\left|p_1 k_\alpha(u_1)\right|du_1 \right.\right.\right.\\
&\quad + \left.\left.\left.\int_{0}^{b_1-a_1}
\left|q_1 k_\alpha(u_1)\right| du_1\right)dt_2\right)d\tau\right]\\
&\quad + C_1 C_4 \left[\int_{a_1}^{b_1}\left(\int_{a_2}^{b_2}\left(
\int_{0}^{b_2-a_2}\left|p_2 k_\alpha(u_2)\right| du_2\right.\right.\right.\\
&\quad + \left.\left.\left.\int_{0}^{b_2-a_2}
\left|q_2 k_\alpha(u_2)\right|du_2 \right)d\tau \right)dt_1\right]\\
&= C_2 C_3 \left(\left|p_1\right|+\left|q_1\right|\right)\left\|k_{\alpha}\right\|(b_2-a_2)(b_1-a_1)\\
&\quad + C_1 C_4 \left(\left|p_2\right|+\left|q_2\right|\right)
\left\|k_{\alpha}\right\|(b_2-a_2)(b_1-a_1)\\
&< \infty.
\end{split}
\end{equation*}
Hence, we can use Fubini's theorem to change
the order of integration in the iterated integrals:
\begin{equation*}
\begin{split}
\int_{a_1}^{b_1}&\int_{a_2}^{b_2}\left[g(\textbf{t})\left(K_{P_{t_1}}^{\alpha}\eta_1\right)(\textbf{t})
+f(\textbf{t})\left(K_{P_{t_2}}^{\alpha}\eta_2\right)(\textbf{t})\right]dt_2 dt_1
\end{split}
\end{equation*}
\begin{equation*}
\begin{split}
&= \int_{a_1}^{b_1}\int_{a_2}^{b_2}\left[g(\textbf{t})\left(p_1 \int_{a_1}^{t_1}
k_\alpha (t_1-\tau)\eta_1(\tau,t_2)d\tau\right.\right.\\
&\quad + \left. q_1\int_{t_1}^{b_1} k_\alpha (\tau-t_1)\eta_1(\tau,t_2)d\tau\right)\\
&\quad +f(\textbf{t})\left(p_2 \int_{a_2}^{t_2} k_\alpha (t_2-\tau)\eta_2(t_1,\tau)d\tau\right.\\
&\quad + \left.\left. q_2\int_{t_2}^{b_2} k_\alpha (\tau-t_2)\eta_2(t_1,\tau)d\tau\right)\right]dt_2 dt_1\\
&=\int_{a_2}^{b_2}\left(\int_{a_1}^{b_1}\eta_1(\tau,t_2)\left(p_1\int_{\tau}^{b_1}
k_\alpha (t_1-\tau)g(\textbf{t})dt_1\right.\right.\\
&\quad + \left.\left. q_1\int_{a_1}^{\tau}k_\alpha (\tau-t_1)g(\textbf{t})dt_1\right)d\tau \right)dt_2\\
& \quad +\int_{a_1}^{b_1}\left(\int_{a_2}^{b_2}\eta_2(t_1,\tau)\left(p_2 \int_{\tau}^{b_2}
k_\alpha(t_2-\tau)f(\textbf{t})dt_2\right.\right.\\
&\quad + \left.\left. q_2 \int_{a_2}^{\tau}k_\alpha(\tau-t_2)f(\textbf{t})dt_2\right)d\tau\right)dt_1\\
&=\int_{a_1}^{b_1}\int_{a_2}^{b_2}\eta_1(\tau,t_2)\left(K_{P_{t_1^*}}^{\alpha}g\right)(\tau,t_2) dt_2 d\tau \\
&\quad + \int_{a_1}^{b_1}\int_{a_2}^{b_2}\eta_2(t_1,\tau)\left(K_{P_{t_2^*}}^{\alpha}f\right)(t_1,\tau) d\tau dt_1.
\end{split}
\end{equation*}
\end{proof}

We are now in conditions to state and prove the main result of the paper:
the Green theorem for generalized fractional derivatives.

\begin{theorem}[Generalized Green's Theorem]
\label{thm:ggt}
Let $0<\alpha<1$ and $f,g,\eta\in C^1\left(\Delta_2\right)$.
Let $k_\alpha$ be a difference kernel, i.e.,
$k_\alpha(t_i,\tau)=k_\alpha(t_i-\tau)$ such that $k_\alpha\in L_1([0,b_i-a_i])$,
$i=1,2$, and $K_{P_{t_1}^*}^{1-\alpha}g,K_{P_{t_2}^*}^{1-\alpha}f\in C^1\left(\Delta_2\right)$.
Then, the following formula holds:
\begin{multline*}
\int_{a_1}^{b_1}\int_{a_2}^{b_2}\left[g(\textbf{t})\left(B_{P_{t_1}}^{\alpha}\eta\right)(\textbf{t})
+f(\textbf{t})\left(B_{P_{t_2}}^{\alpha}\eta\right)(\textbf{t})\right]dt_2 dt_1\\
=-\int_{a_1}^{b_1}\int_{a_2}^{b_2}\eta(\textbf{t})\left[\left(A_{P_{t_1}^*}^{\alpha}g\right)(\textbf{t})
+\left(A_{P_{t_2}^*}^{\alpha}f\right)(\textbf{t})\right]dt_2 dt_1\\
+\oint_{\partial\Delta_2}\eta(\textbf{t})\left[\left(K_{P_{t_1}^*}^{1-\alpha}g\right)(\textbf{t})dt_2
-\left(K_{P_{t_2}^*}^{1-\alpha}f\right)(\textbf{t})dt_1\right].
\end{multline*}
\end{theorem}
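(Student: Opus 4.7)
The plan is to reduce the statement to the two-dimensional generalized integration by parts formula already proved in Theorem~\ref{theorem:GRI}, combined with the classical (integer-order) Green theorem for smooth functions. The enabling observation is that by Definition~\ref{def:GPC} the Caputo operator factors as $B_{P_{t_i}}^{\alpha}=K_{P_{t_i}}^{1-\alpha}\circ \partial/\partial t_i$, so $B_{P_{t_i}}^{\alpha}\eta$ is nothing more than a $K$-op of order $1-\alpha$ applied to the continuous function $\partial\eta/\partial t_i$. This is the bridge between the Caputo statement and the $K$-op statement of Theorem~\ref{theorem:GRI}.

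The first step is to rewrite the left-hand side of the desired identity as
\begin{equation*}
\int_{a_1}^{b_1}\int_{a_2}^{b_2}\left[g(\textbf{t})\bigl(K_{P_{t_1}}^{1-\alpha}\tfrac{\partial\eta}{\partial t_1}\bigr)(\textbf{t}) + f(\textbf{t})\bigl(K_{P_{t_2}}^{1-\alpha}\tfrac{\partial\eta}{\partial t_2}\bigr)(\textbf{t})\right]dt_2\, dt_1.
\end{equation*}
Since $\eta\in C^1(\Delta_2)$, the partials $\eta_1:=\partial\eta/\partial t_1$ and $\eta_2:=\partial\eta/\partial t_2$ are continuous on $\Delta_2$, so the hypotheses of Theorem~\ref{theorem:GRI} are satisfied with $\alpha$ replaced by $1-\alpha$ and with these choices of $\eta_1,\eta_2$. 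Invoking it swaps the $K$-ops onto $f,g$ and dualizes the $p$-sets, producing
\begin{equation*}
\int_{a_1}^{b_1}\int_{a_2}^{b_2}\left[\tfrac{\partial\eta}{\partial t_1}\bigl(K_{P_{t_1}^*}^{1-\alpha}g\bigr)(\textbf{t}) + \tfrac{\partial\eta}{\partial t_2}\bigl(K_{P_{t_2}^*}^{1-\alpha}f\bigr)(\textbf{t})\right]dt_2\, dt_1.
\end{equation*}

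Next, set $\tilde g:=K_{P_{t_1}^*}^{1-\alpha}g$ and $\tilde f:=K_{P_{t_2}^*}^{1-\alpha}f$; by hypothesis both belong to $C^1(\Delta_2)$. The ordinary product rule gives $\tilde g\,\partial\eta/\partial t_1 = \partial(\tilde g\eta)/\partial t_1 - \eta\,\partial\tilde g/\partial t_1$ and analogously for $\tilde f$. Substituting and integrating over $\Delta_2$, the two exact-derivative pieces combine via the classical Green theorem, applied to the $C^1$ vector field $(-\tilde f\eta,\tilde g\eta)$, into the boundary line integral $\oint_{\partial\Delta_2}\eta\bigl(\tilde g\, dt_2 - \tilde f\, dt_1\bigr)$. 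By Definition~\ref{def:GPRL}, $\partial\tilde g/\partial t_1 = A_{P_{t_1}^*}^{\alpha}g$ and $\partial\tilde f/\partial t_2 = A_{P_{t_2}^*}^{\alpha}f$, so the remaining area integral equals $-\iint_{\Delta_2}\eta\bigl[A_{P_{t_1}^*}^{\alpha}g + A_{P_{t_2}^*}^{\alpha}f\bigr]dt_2\, dt_1$. Collecting the two contributions produces precisely the stated formula.

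The argument is therefore a short chain of rewriting steps rather than any real analytic work; the main obstacle is simply verifying that the hypotheses exactly match what each tool requires. Continuity of $\partial\eta/\partial t_i$ is what Theorem~\ref{theorem:GRI} needs, and the assumed $C^1$ regularity of $\tilde f$ and $\tilde g$ (together with $\eta\in C^1$) is what permits invoking the classical Green theorem on the $C^1$ products $\tilde g\eta$ and $\tilde f\eta$. No further analytic subtlety beyond the Fubini-type bookkeeping already carried out in the proof of Theorem~\ref{theorem:GRI} is needed.
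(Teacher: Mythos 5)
Your proposal is correct and follows essentially the same route as the paper's own proof: rewrite $B_{P_{t_i}}^{\alpha}\eta$ as $K_{P_{t_i}}^{1-\alpha}(\partial\eta/\partial t_i)$, apply Theorem~\ref{theorem:GRI} at order $1-\alpha$ to dualize the $p$-sets, and then use the classical Green theorem together with the identity $\frac{\partial}{\partial t_i}K_{P_{t_i}^*}^{1-\alpha}=A_{P_{t_i}^*}^{\alpha}$. Your explicit checking of the hypotheses at each step is a welcome addition but does not change the argument.
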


\begin{proof}
By the definition of generalized partial Caputo fractional derivative,
Theorem~\ref{theorem:GRI}, and the standard Green's theorem, one has
\begin{equation*}
\begin{split}
\int_{a_1}^{b_1}&\int_{a_2}^{b_2}\left[g(\textbf{t})\left(B_{P_{t_1}}^{\alpha}\eta\right)(\textbf{t})
+f(\textbf{t})\left(B_{P_{t_2}}^{\alpha}\eta\right)(\textbf{t})\right]dt_2 dt_1\\
&=\int_{a_1}^{b_1}\int_{a_2}^{b_2}\left[g(\textbf{t})\left(
K_{P_{t_1}}^{1-\alpha}\frac{\partial}{\partial t_1} \eta\right)(\textbf{t})
+ f(\textbf{t})\left(K_{P_{t_2}}^{1-\alpha}\frac{\partial}{\partial t_2}
\eta\right)(\textbf{t})\right]dt_2 dt_1\\
&=\int_{a_1}^{b_1}\int_{a_2}^{b_2}\left[\frac{\partial}{\partial t_1}\eta(\textbf{t})
\left(K_{P_{t_1}^*}^{1-\alpha} g\right)(\textbf{t})
+ \frac{\partial}{\partial t_2}\eta(\textbf{t})
\left(K_{P_{t_2}^*}^{1-\alpha} f\right)(\textbf{t})\right]dt_2 dt_1\\
&=-\int_{a_1}^{b_1}\int_{a_2}^{b_2}\eta(\textbf{t})\left[\frac{\partial}{\partial t_1}
\left(K_{P_{t_1^*}}^{1-\alpha}g\right)(\textbf{t})
+ \frac{\partial}{\partial t_2}\left(
K_{P_{t_2^*}}^{1-\alpha}f\right)(\textbf{t})\right]dt_2 dt_1\\
&\quad + \oint_{\partial\Delta_2}\eta(\textbf{t})\left[\left(K_{P_{t_1}^*}^{1-\alpha}g\right)(\textbf{t})dt_2
-\left(K_{P_{t_2}^*}^{1-\alpha}f\right)(\textbf{t})dt_1\right].
\end{split}
\end{equation*}
\end{proof}

\begin{corollary}
Let $0<\alpha<1$ and $f,g,\eta\in C^1\left(\Delta_2\right)$.
If $\left({_{t_1}}\textsl{I}^{1-\alpha}_{b_1} g\right)(\textbf{t})$
and $\left({_{t_2}}\textsl{I}^{1-\alpha}_{b_2} f\right)(\textbf{t})$
are continuously differentiable on the rectangle $\Delta_2$, then
\begin{multline*}
\int_{a_1}^{b_1}\int_{a_2}^{b_2}\left[g(\textbf{t})\left({^{C}_{a_1}}\textsl{D}^{\alpha}_{t_1}
\eta\right)(\textbf{t})+f(\textbf{t})\left({^{C}_{a_2}}\textsl{D}^{\alpha}_{t_2}
\eta\right)(\textbf{t})\right]dt_2 dt_1\\
=\int_{a_1}^{b_1}\int_{a_2}^{b_2}\eta(\textbf{t})\left[
\left({_{t_1}}\textsl{D}^{\alpha}_{b_1} g\right)(\textbf{t})
+\left({_{t_2}}\textsl{D}^{\alpha}_{b_2} f\right)(\textbf{t})\right]dt_2 dt_1\\
+\oint_{\partial\Delta_2}\eta(\textbf{t})\left[\left({_{t_1}}\textsl{I}^{1-\alpha}_{b_1}
g\right)(\textbf{t})dt_2-\left({_{t_2}}\textsl{I}^{1-\alpha}_{b_2} f\right)(\textbf{t})dt_1\right].
\end{multline*}
\end{corollary}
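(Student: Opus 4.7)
The corollary is a direct specialization of Theorem~\ref{thm:ggt}, so my plan is simply to fit the classical left Caputo / right Riemann--Liouville / right Riemann--Liouville integral operators into the generalized framework and check that the hypotheses of Theorem~\ref{thm:ggt} transfer correctly.

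First, I would pick the kernel and parameter sets that turn the generalized operators into the classical ones listed in the corollary. Looking at the reductions already spelled out in Section~\ref{sec:prelim}, the natural choice is $k_\alpha(t-\tau)=\frac{1}{\Gamma(\alpha)}(t-\tau)^{\alpha-1}$ together with $P_{t_i}=L_{t_i}=\langle a_i,t_i,b_i,1,0\rangle$ for $i=1,2$. Then by definition $P_{t_i}^{*}=\langle a_i,t_i,b_i,0,1\rangle=R_{t_i}$, and the identifications $B_{P_{t_i}}^{\alpha}\eta = {^{C}_{a_i}}\textsl{D}^{\alpha}_{t_i}\eta$, $A_{P_{t_i}^*}^{\alpha}g = -\,{_{t_i}}\textsl{D}^{\alpha}_{b_i}g$, and $K_{P_{t_i}^*}^{1-\alpha}g = {_{t_i}}\textsl{I}^{1-\alpha}_{b_i}g$ (and analogously for $f$) are exactly the ones recorded in Section~\ref{sec:prelim}.

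Next I would verify the three hypotheses of Theorem~\ref{thm:ggt}. The assumption $f,g,\eta\in C^1(\Delta_2)$ is shared with the corollary; the kernel $k_\alpha(u)=u^{\alpha-1}/\Gamma(\alpha)$ belongs to $L_1([0,b_i-a_i])$ because $\alpha\in(0,1)$; and the $C^1$-regularity of $K_{P_{t_1}^*}^{1-\alpha}g$ and $K_{P_{t_2}^*}^{1-\alpha}f$ translates precisely to the corollary's hypothesis that ${_{t_1}}\textsl{I}^{1-\alpha}_{b_1}g$ and ${_{t_2}}\textsl{I}^{1-\alpha}_{b_2}f$ are continuously differentiable on $\Delta_2$. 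Hence Theorem~\ref{thm:ggt} applies.

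Finally, I would substitute the generalized operators by their classical counterparts in the identity of Theorem~\ref{thm:ggt}. The only subtle point, and the one place I would double-check, is the sign bookkeeping: because $A_{P_{t_i}^*}^{\alpha}=-\,{_{t_i}}\textsl{D}^{\alpha}_{b_i}$ while $K_{P_{t_i}^*}^{1-\alpha}=+\,{_{t_i}}\textsl{I}^{1-\alpha}_{b_i}$, the minus sign in front of the double integral on the right-hand side of Theorem~\ref{thm:ggt} combines with the minus sign coming from the right Riemann--Liouville derivative to produce the positive sign that appears in the corollary, whereas the boundary line integral is unchanged. This sign tracking is the only real content of the argument, and once it is handled the corollary drops out immediately.
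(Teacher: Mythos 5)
Your proposal is correct and matches the paper's own (implicit) treatment: the corollary is presented as an immediate specialization of Theorem~\ref{thm:ggt} with the kernel family $k_\beta(u)=u^{\beta-1}/\Gamma(\beta)$ and $p$-sets $P_{t_i}=\langle a_i,t_i,b_i,1,0\rangle$, whose duals $P_{t_i}^{*}=\langle a_i,t_i,b_i,0,1\rangle$ yield the right-sided operators. Your sign bookkeeping ($A_{P_{t_i}^*}^{\alpha}=-\,{_{t_i}}\textsl{D}^{\alpha}_{b_i}$, so the leading minus sign becomes a plus, while the boundary term is untouched) is exactly the point to check, and the kernel actually entering all three operators is $k_{1-\alpha}(u)=u^{-\alpha}/\Gamma(1-\alpha)$, which lies in $L_1([0,b_i-a_i])$ for $\alpha\in(0,1)$ as you observe.
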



\section*{Acknowledgements}

This work received \emph{The Grunwald--Letnikov Award: Best Student Paper (theory)},
at the 2012 Symposium on Fractional Differentiation and Its Applications (FDA'2012),
May 16, 2012, Hohai University, Nanjing.
It was supported by {\it FEDER} funds through
{\it COMPETE} (Operational Program Factors of Competitiveness)
and by Portuguese funds through the
{\it Center for Research and Development
in Mathematics and Applications} (CIDMA)
and the Portuguese Foundation for Science and Technology (FCT),
within project PEst-C/MAT/UI4106/2011
with COMPETE number FCOMP-01-0124-FEDER-022690.
Odzijewicz was also supported by FCT under Ph.D. fellowship
SFRH/BD/33865/2009; Malinowska by Bia{\l}ystok
University of Technology grant S/WI/02/2011;
and Torres by FCT through the project PTDC/MAT/113470/2009.



\bigskip \smallskip

\it


\noindent
$^1$ Center for Research and Development in Mathematics and Applications \\
Department of Mathematics \\
University of Aveiro \\
3810-193 Aveiro, PORTUGAL \\[4pt]
e-mail: tatianao@ua.pt, delfim@ua.pt
\hfill Received: July 09, 2012 \\[12pt]
$^2$ Faculty of Computer Science \\
Bia{\l}ystok University of Technology \\
15-351 Bia\l ystok, POLAND \\[4pt]
e-mail: a.malinowska@pb.edu.pl


\end{document}